\documentclass[10pt]{article}
\usepackage{mathrsfs}
\usepackage{amsmath,amscd,amsthm,amsfonts}
\usepackage{graphicx}
\textwidth 15.5cm \textheight 22.5cm
\oddsidemargin=0.5cm\topmargin=-1cm \numberwithin{equation}{section}

\theoremstyle{plain}
\newtheorem{theorem}{Theorem}[section]

\newtheorem{definition}[theorem]{Definition}
\newtheorem{lemma}[theorem]{Lemma}

\newtheorem{proposition}[theorem]{Proposition}
\newtheorem{remark}[theorem]{Remark}

\newtheorem*{CCL}{Center Closing Lemma}
\newtheorem*{CSD}{Center Spectral Decomposition}
\newtheorem*{QSP}{Quasi-shadowing Property}

\begin{document}
\vskip 5.1cm
\title{Center Specification Property and Entropy for Partially Hyperbolic Diffeomorphisms
\footnotetext{\\
\emph{ 2010 Mathematics Subject Classification}: 37D30, 37B40, 37C50\\
\emph{Keywords and phrases}: partially hyperbolic diffeomorphism; uniformly compact center foliation;
center specification property; topological entropy.\\
It is supported by NSFC(No: 11371120). The second author is also supported by NCET(No: 11-0935), NSFHB(No: A2014205154, A2013205148), BRHB(No: BR2-219) and GCCHB(No: GCC2014052)}}
\author {Lin Wang and Yujun Zhu \\
\small {College of Mathematics and Information Science, and}\\
\small {Hebei Key Laboratory of Computational Mathematics and Applications,}\\
\small {Hebei Normal University, Shijiazhuang, 050024, P.R.China}}
\date{}
\maketitle

\begin{abstract}
Let $f$ be a partially hyperbolic diffeomorphism on a closed (i.e., compact and boundaryless) Riemannian manifold $M$ with a uniformly compact center foliation $\mathcal{W}^{c}$. The relationship among topological entropy $h(f)$, entropy of the restriction of $f$ on the center foliation $h(f, \mathcal{W}^{c})$ and the growth rate of periodic center leaves $p^{c}(f)$ is investigated. It is first shown that if a compact locally maximal invariant center set $\Lambda$ is center topologically mixing  then $f|_{\Lambda}$ has the center specification property, i.e., any specification with a large spacing can be center shadowed by a periodic center leaf  with a fine precision. Applying the center spectral decomposition and the center specification property, we show that
 $ h(f)\leq h(f,\mathcal{W}^{c})+p^{c}(f)$. Moreover, if the center foliation $\mathcal{W}^{c}$ is of dimension one, we obtain an equality $h(f)= p^{c}(f)$.
\end{abstract}
\section{Introduction}

The investigation of complexity of the orbit structure is one of the main topics in dynamical systems. It is well known that topological entropy is the most important numerical invariant related to the total exponential complexity of the orbit structure. It represents the exponential growth rate for the number of orbit segments distinguishable with arbitrarily fine but finite precision.

In smooth dynamical systems, the hyperbolicity implies the coincidence of topological entropy and growth rate of the particular type of orbits: periodic orbits, that is, for a diffeomorphism $f$ on a closed Riemannian manifold $M$, if $\Lambda$ is a compact locally maximal hyperbolic set for $f$, then
\begin{equation}\label{hyper1}
h(f|_{\Lambda})=p(f|_{\Lambda}),
\end{equation}
 where $h(f|_{\Lambda})$ and $p(f|_{\Lambda})$ are the topological entropy and the growth rate of periodic orbits of $f|_{\Lambda}$ respectively. Therefore, if $f$ is uniformly hyperbolic then
\begin{equation}\label{hyper}
h(f)=p(f)
\end{equation}
by the spectral decomposition and the specification property (see section 18.5 of \cite{Katok}, for example).

The main goal of this paper is to obtain analogues of the above equalities (\ref{hyper1}) and (\ref{hyper}) for partially hyperbolic diffeomorphisms. Roughly speaking, a diffeomorphism $f : M\longrightarrow M$
is called \emph{partially hyperbolic} if the tangent bundle $TM$ is decomposed into three non-trivial,
$df$-invariant subbundles, called the \emph{stable}, \emph{unstable} and \emph{center} bundle, such
that $df$ contracts uniformly vectors in the stable direction, expands uniformly
vectors in the unstable direction and contracts and/or expands in a weaker
way vectors in the center direction. The presence of the center direction permits a rich type of structure and induces more complicated dynamics. Partial hyperbolicity theory was introduced by Brin and
Pesin (\cite{Brin}) and independently by Hirsh, Pugh and Shub \cite{Hirsch}. Some works that appeared in the nineties opened the way for making partial hyperbolicity one of the most active topics in dynamics
in recent years.  For general theory of partially hyperbolic system and recent progress, we refer to the books \cite{Hirsch}, \cite{Pesin}, \cite{Barreira}, \cite{Bonatti} and \cite{Hertz} and the papers in their references.

Generally, for a partially hyperbolic diffeomorphism $f$, the stable and unstable foliations always exist, while the center bundle might not be integrable. In this paper, we restrict ourselves to the case that the center foliation exists and is uniformly compact. For some recent results on dynamics of partially hyperbolic diffeomorphisms with a uniformly compact center foliation, we refer to \cite{Bohnet}, \cite{Bonatti1}, \cite{Carrasco11} and \cite{Carrasco}.

The counterpart of $p(f)$ in (\ref{hyper}) is  $p^c(f)$, i.e., the growth rate of periodic center leaves, for the partially hyperbolic case. However, we can not expect the equality $h(f)=p^c(f)$ holds  since the center direction may has contribution to the entropy. So we should pay more attention to the relationship among topological entropy, entropy of the center foliation which is denoted by $h(f,\mathcal{W}^{c})$ and the growth rate of periodic center leaves.

As we mentioned above, to get (\ref{hyper}) for uniformly hyperbolic systems one always resorts to the specification property and the spectral decomposition theorem.  For the partially hyperbolic diffeomorphisms, we will first give a center specification property applying the techniques and results in \cite{Hu} and \cite{Hu1}.

\begin{theorem}\label{centerspecification}
Let $f$ be a partially hyperbolic diffeomorphism on $M$ with a uniformly compact center foliation and $\Lambda$ be a compact locally maximal invariant center set of $f$. If $\Lambda$ is center
topologically mixing, then $f|_{\Lambda}$ has the center specification property.
\end{theorem}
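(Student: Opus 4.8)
The plan is to transplant Bowen's classical derivation of the specification property from topological mixing plus shadowing, but to run the whole argument ``modulo the center foliation,'' i.e.\ on the leaf space and through a quasi-shadowing mechanism rather than ordinary shadowing. Since $\mathcal{W}^{c}$ is uniformly compact, the quotient $M_{c}=M/\mathcal{W}^{c}$ is a compact metric space on which $f$ induces a homeomorphism $\bar f$, and transverse to the center the partial hyperbolicity of $f$ descends to a hyperbolic (stable/unstable) structure for $\bar f$: a local product structure and expansiveness in the transverse directions. In this language, center topological mixing of $\Lambda$ is exactly topological mixing of $\bar f$ on the compact invariant set $\bar\Lambda=\Lambda/\mathcal{W}^{c}$, and the center specification property is the specification property for $\bar f|_{\bar\Lambda}$, lifted back to center leaves with a precision measured by the distance between leaves in $M_{c}$.

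First I would fix $\ve>0$ and, using center topological mixing, extract $N=N(\ve)$ such that any two center leaves $L,L'\subset\Lambda$ can be joined by a center $\ve$-pseudo-orbit of any prescribed length $\geq N$, meaning a finite string of leaves along which $f(\text{leaf})$ lies within the $\ve$-center-neighborhood of the next leaf. Next, given a specification $\{(x_{j},a_{j},b_{j})\}_{j=1}^{k}$ with spacing $b_{j}-a_{j+1}\geq N$, I concatenate the genuine orbit segments $f^{a_{j}}(x_{j}),\dots,f^{b_{j}}(x_{j})$ with the connecting center $\ve$-pseudo-orbits supplied by the mixing step, and close the last gap back to the first in the same way; this yields a \emph{periodic} center $\ve$-pseudo-orbit $\{y_{n}\}$ contained in $\Lambda$, whose only defects are center-leaf jumps together with arbitrarily small transverse errors. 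Then I would invoke the quasi-shadowing property (obtained by the techniques of \cite{Hu,Hu1}): this pseudo-orbit is quasi-shadowed by a genuine point $z$, in the sense that the center leaf through $f^{n}(z)$ stays within a controlled distance $\delta(\ve)$, with $\delta(\ve)\to 0$, of the center leaf through $y_{n}$ in $M_{c}$; local maximality of $\Lambda$ forces $z\in\Lambda$. Finally, since the pseudo-orbit is periodic and the transverse dynamics is expansive, the Center Closing Lemma upgrades $z$ to a point whose center leaf is genuinely periodic under $f$ with period the total length of the concatenation, still within the prescribed center-precision of each specified block; unwinding the estimates, this periodic center leaf $\ve'$-center-shadows the specification, which is the conclusion.

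The main obstacle I expect is the interplay between \emph{closing up} the pseudo-orbit and \emph{preserving} the shadowing along the specified blocks. In the uniformly hyperbolic case the expansion/contraction in the stable and unstable directions lets one close a pseudo-orbit without appreciable displacement, but here the center direction is neither contracted nor expanded, so only the transverse coordinates are under dynamical control and one must argue that the residual center drift accumulated over the connecting segments is absorbed into the allowed precision. This is precisely the role of the quasi-shadowing property and the Center Closing Lemma of \cite{Hu,Hu1}, combined with the uniform compactness of $\mathcal{W}^{c}$, which bounds leaf diameters and makes $M_{c}$ and the quotient dynamics $\bar f$ well behaved enough for the hyperbolic-style estimates to apply. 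A secondary point requiring care is that center topological mixing, and hence the connecting segments, must be taken in the center-saturated category, so that the concatenated object is genuinely a center pseudo-orbit to which quasi-shadowing applies; again uniform compactness is what legitimizes passing to the leaf space throughout.
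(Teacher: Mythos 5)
Your strategy is sound and reaches the theorem by a genuinely different route from the paper. The paper does not pass to pseudo-orbits or to the leaf space at all: it runs Bowen's original Axiom A argument directly in $M$, using the local product structure (Lemma \ref{localproduct}, from Lemma 6.1 of \cite{Hu}) to prove first that periodic center leaves and the center-unstable leaves $W^{cu}(p)$ of points on periodic center leaves are dense in $\Lambda$, and then that this density is uniform: for every $\alpha>0$ there is $N$ with $f^{n}(W^{c}(W^{u}_{\alpha}(x)))\cap W^{s}_{\alpha}(y)\neq\emptyset$ for all $x,y\in\Lambda$ and $n\geq N$. The shadowing point is then built by successively intersecting center-unstable saturates with stable manifolds, the accumulated unstable errors being summable thanks to the contraction rate $\lambda$; only the periodic part invokes the Center Closing Lemma. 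Your route --- concatenate the specified blocks with connecting orbit segments supplied by mixing into a periodic center pseudo-orbit, quasi-shadow it, and close it up --- is the other classical derivation of specification (from shadowing plus mixing plus expansiveness), and it does work here because the uniformly compact case of the Quasi-Shadowing Property returns a genuine point $y_{0}$ whose true orbit tracks the pseudo-orbit at the level of center leaves in Hausdorff distance, which is exactly the notion of center shadowing required. The paper's approach never needs the quotient $M/\mathcal{W}^{c}$ to carry any structure; yours isolates the dynamical input as quasi-shadowing plus mixing and so would transfer to settings where a local product structure is awkward to state.

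Two points need shoring up. First, your opening frame --- that $\bar f$ on $M/\mathcal{W}^{c}$ inherits a bona fide transverse hyperbolic structure --- is both stronger than what you actually use and not free: the leaf space is in general only an orbifold-like metric space, and making the quotient dynamics genuinely hyperbolic is precisely where Carrasco \cite{Carrasco11} needs the additional hypothesis that the foliation is without holonomy (see the paper's closing remark). Since your argument only invokes quasi-shadowing and the Center Closing Lemma upstairs in $M$, drop the quotient framing rather than defend it. Second, at the closing step you need more than the Center Closing Lemma as stated (closeness of $W^{c}(p)$ to $W^{c}(x)$ at time $0$): you need the periodic leaf to track the orbit for the whole period, i.e. $d_{H}(W^{c}(f^{i}(p)),W^{c}(f^{i}(x)))\leq\varepsilon/2$ for all $0\leq i\leq n$. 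The paper uses exactly this strengthened form (its display (\ref{shadow2})); it is available because the periodic leaf in \cite{Hu} is itself produced by quasi-shadowing the periodized pseudo-orbit, but your phrase ``still within the prescribed center-precision of each specified block'' silently assumes it. Finally, make explicit that the uniform gap $N$ valid for all pairs of leaves comes from compactness of $\Lambda$ (finitely many small open center sets, maximum of the mixing times), and that the mixing precision must be taken at the level $\delta(\varepsilon)$ demanded by quasi-shadowing rather than at $\varepsilon$ itself.
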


Applying Theorem \ref{centerspecification} and the center spectral decomposition theorem in \cite{Hu} to  investigate the relation among $h(f)$, $p^c(f)$ and $h(f,\mathcal{W}^{c})$, we get the following result.

\begin{theorem}\label{entropyrelation}
Let $f:M\rightarrow M$ be a partially hyperbolic diffeomorphism with uniformly compact center foliation. Then
\begin{equation}\label{entropyinequality}
h(f)\leq p^{c}(f)+h(f, \mathcal{W}^{c}).
\end{equation}
Moreover, if the center foliation $\mathcal{W}^{c}$ is of dimension one, then
\begin{equation}\label{entropyequality}
h(f)=p^{c}(f).
\end{equation}
\end{theorem}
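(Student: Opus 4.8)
\medskip
\noindent\emph{Proof plan.} The plan is to realize all the ``center'' objects as genuine dynamics on the leaf space and then run a Bowen-type fibre-entropy argument. Since $\mathcal{W}^{c}$ is uniformly compact, the leaf space $\widehat{M}:=M/\mathcal{W}^{c}$ is a compact metric space (an orbifold in general), the quotient map $\pi\colon M\to\widehat{M}$ is continuous and $f$ descends to a homeomorphism $\widehat{f}\colon\widehat{M}\to\widehat{M}$ with $\pi\circ f=\widehat{f}\circ\pi$; moreover $\widehat{f}$ is expansive, since the only non-hyperbolic direction of $f$ has been collapsed (this is part of the leaf-space description used in \cite{Hu}). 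Under this dictionary a periodic center leaf is exactly a periodic point of $\widehat{f}$, so $p^{c}(f)$ is the exponential growth rate of $\#\mathrm{Per}_{n}(\widehat{f})$; center topological mixing of a center set $\Lambda$ is topological mixing of $\widehat{f}|_{\pi(\Lambda)}$; the center specification property of $f|_{\Lambda}$ supplied by Theorem \ref{centerspecification} is the specification property of $\widehat{f}|_{\pi(\Lambda)}$; and the center-foliation entropy $h(f,\mathcal{W}^{c})$ is, by its definition via $(n,\ve)$-separated subsets of center leaves, at least the Bowen fibre entropy $\sup_{\widehat{y}\in\widehat{M}}h\bigl(f,\pi^{-1}(\widehat{y})\bigr)$, each fibre $\pi^{-1}(\widehat{y})$ being a single center leaf. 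With these identifications, Bowen's inequality for the factor map $\pi$ gives
\begin{equation*}
h(f)\ \le\ h(\widehat{f})+\sup_{\widehat{y}\in\widehat{M}}h\bigl(f,\pi^{-1}(\widehat{y})\bigr)\ \le\ h(\widehat{f})+h(f,\mathcal{W}^{c}),
\end{equation*}
so (\ref{entropyinequality}) reduces to $h(\widehat{f})\le p^{c}(f)$, which I will in fact prove as an equality.

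Next I would prove $h(\widehat{f})=p^{c}(f)$. Apply the center spectral decomposition of \cite{Hu}: $\Omega(f)$ is a finite disjoint union of compact locally maximal invariant center sets, each of which splits into finitely many cyclically permuted subpieces on which a suitable power of $f$ is center topologically mixing. Passing to that power and to such a subpiece changes neither the exponential entropy rate nor the periodic-leaf growth rate except by the evident integer factor, and by Theorem \ref{centerspecification} the corresponding restriction of $\widehat{f}$ has the specification property. For an expansive homeomorphism of a compact metric space with the specification property one has $h_{\mathrm{top}}=\lim_{n}\tfrac1n\log\#\mathrm{Per}_{n}$ (cf.\ \S18.5 of \cite{Katok}); applying this on each piece and taking the maximum over the finitely many pieces (all periodic points of $\widehat{f}$ lie in $\Omega(\widehat f)$) yields $h(\widehat{f})=p^{c}(f)$. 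Combined with the previous paragraph this proves (\ref{entropyinequality}); it also gives the lower bound $h(f)\ge h(\widehat{f})=p^{c}(f)$, since $(\widehat{M},\widehat{f})$ is a topological factor of $(M,f)$.

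For the second assertion it now suffices to show $h(f,\mathcal{W}^{c})=0$ when $\dim\mathcal{W}^{c}=1$, for then (\ref{entropyequality}) follows from (\ref{entropyinequality}) together with the lower bound just obtained. When the center foliation is one-dimensional and uniformly compact every leaf is a circle, and uniform compactness bounds the leafwise lengths $\ell(L)$ uniformly, say $\ell(L)\le C_{0}$; also $f$ maps each center leaf diffeomorphically onto another center leaf. Fix $\widehat{y}$, write $L=\pi^{-1}(\widehat{y})$, and let $E=\{q_{1},\dots,q_{N}\}\subset L$ be an $(n,\ve)$-separated set for $f$, the points listed in cyclic order on $L$; let $I_{i}$ be the open complementary arc between $q_{i}$ and $q_{i+1}$ (indices mod $N$). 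For each fixed $j$ the arcs $f^{j}(I_{i})$ tile the circle $f^{j}(L)$, hence $\sum_{i}\ell\bigl(f^{j}(I_{i})\bigr)=\ell\bigl(f^{j}(L)\bigr)\le C_{0}$, so $\ell\bigl(f^{j}(I_{i})\bigr)>\ve$ for at most $C_{0}/\ve$ indices $i$. On the other hand $q_{i},q_{i+1}$ are $(n,\ve)$-separated, so $d(f^{j}q_{i},f^{j}q_{i+1})>\ve$ for some $j<n$, and since $f^{j}(I_{i})$ is one of the two subarcs of $f^{j}(L)$ joining its endpoints $f^{j}q_{i}$ and $f^{j}q_{i+1}$ we get $\ve<d(f^{j}q_{i},f^{j}q_{i+1})\le\ell\bigl(f^{j}(I_{i})\bigr)$. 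Running over all $i$ and the $n$ choices of $j$ forces $N\le nC_{0}/\ve$. Thus every $(n,\ve)$-separated subset of a center leaf has only $O(n)$ elements, so $h\bigl(f,\pi^{-1}(\widehat{y})\bigr)=0$ for every $\widehat{y}$ and hence $h(f,\mathcal{W}^{c})=0$; this finishes the proof.

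I expect the main obstacle to lie in the first two paragraphs: making the leaf-space picture rigorous (compactness and metrizability of $\widehat{M}$, expansivity of $\widehat{f}$, and the identification of the paper's $h(f,\mathcal{W}^{c})$ and $p^{c}(f)$ with the Bowen fibre entropy and with $\#\mathrm{Per}_{n}(\widehat{f})$), and checking that the center spectral decomposition together with Theorem \ref{centerspecification} really yield the honest specification property for $\widehat{f}$ on each basic piece, so that $h_{\mathrm{top}}=\lim_{n}\tfrac1n\log\#\mathrm{Per}_{n}$ may be invoked. By contrast, the one-dimensional vanishing estimate is elementary once phrased through arc lengths on the circle leaves and the uniform bound $\ell(L)\le C_{0}$.
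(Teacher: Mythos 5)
Your route is essentially correct but genuinely different from the paper's. The paper never passes to the leaf space: it uses the variational principle together with Lemma \ref{support} (every invariant measure is supported on $\overline{P^{c}(f)}=\Omega^{c}(f)$) to reduce to $h(f)=h(f|_{\Omega^{c}(f)})$, applies the center spectral decomposition to isolate one mixing piece $X_{i^*}$ for a power $f^{N}$, and then counts directly upstairs: by Theorem \ref{centerspecification} an $(n,\varepsilon)$-separated set $E_{n}$ meets at most $\mathrm{card}(\hat{P}^{c}_{n+N(\varepsilon/2)})$ distinct center leaves, and within each leaf it has at most $s(\varepsilon,n,W^{c},f)$ points, giving $h\le p^{c}+h(f,\mathcal{W}^{c})$ with no factor map in sight. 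You instead encode everything in the quotient $\widehat{M}=M/\mathcal{W}^{c}$ and invoke Bowen's fibre-entropy inequality plus the classical ``expansive $+$ specification $\Rightarrow h_{\mathrm{top}}=\lim\frac1n\log\#\mathrm{Per}_{n}$'' theorem. What your approach buys: the identity $h(\widehat f)=p^{c}(f)$ is stronger than what the paper extracts from its counting, and your one-dimensional step is cleaner and more honest than the paper's --- the paper argues that each \emph{periodic} center leaf is a circle with zero leaf entropy and then waves at density of periodic leaves, whereas your arc-length count $N\le nC_{0}/\varepsilon$ is uniform over \emph{all} leaves and kills $h(f,\mathcal{W}^{c})$ directly with the paper's own definition (which takes $\max_{x}$ inside the $\limsup$, so the uniformity matters). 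What the paper's approach buys: it avoids having to establish that $\widehat{M}$ is a compact metric space on which $\widehat f$ is expansive --- facts which are true for uniformly compact center foliations (via Bonatti--Bohnet and plaque expansivity) but are real work, as you acknowledge; note the paper's closing remark credits exactly this quotient viewpoint to Carrasco.

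Two points where you owe more than you have written. First, expansivity of $\widehat f$ does not come for free from ``the non-hyperbolic direction has been collapsed''; you must convert plaque expansiveness (a statement about pseudo-orbits respecting center plaques) into expansiveness for the Hausdorff metric on leaves, which takes an argument. Second, applying Theorem \ref{centerspecification} to the pieces of the spectral decomposition requires those pieces to be compact \emph{locally maximal} invariant center sets; the Center Spectral Decomposition as stated does not assert local maximality, so this must be checked --- though the paper's own proof silently incurs the same debt, so this is not a defect specific to your argument. (Also, the decomposition is of $\Omega^{c}(f)$, not of $\Omega(f)$ as you wrote, and you should note $\Omega(\widehat f)=\pi(\Omega^{c}(f))$ so that all periodic points of $\widehat f$ are captured by the decomposed set.)
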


\begin{remark}
Generally, the equality (\ref{entropyequality}) does not hold. For example, let $f_{1}$ be an Anosov diffeomorphism on $M$ and $f_{2}$ a diffeomorphism on $N$ which has a horseshoe. We can choose $f_{1}$ and $f_{2}$ appropriately such that the direct product $f=f_{1}\times f_{2}$ is a partially hyperbolic diffeomorphism on $M\times N$. Clearly for any $x=(x_1,x_2)\in M\times N$, its center leaf $W^c(x)$ is $\{x_1\}\times N$. Note that $h(f)=h(f_{1})+h(f_{2}),\;h(f_{1})=p(f_{1})=p^{c}(f)$, and $h(f_{2})>0$ since $f_{2}$ has a horseshoe. Therefore $h(f)>p^{c}(f)$.
\end{remark}

This paper is organized as follows.
The statements of preliminaries about a partially hyperbolic diffeomorphism $f$ are given in Section 2.  In Section 3, we apply the center closing lemma to establish a center specification property (Theorem \ref{centerspecification}) for any compact locally maximal invariant center set. Section 4 is devoted to the  proof of  Theorem \ref{entropyrelation}.

\section{Preliminaries}

Everywhere in this paper, we assume that $M$ is a smooth
 closed Riemannian manifold. We denote by $\|\cdot\|$
and $d(\cdot,\cdot)$ the norm on $TM$ and the distance function on $M$ induced by the Riemannian metric, respectively.

A diffeomorphism $f: M\to M$ is said to be
\emph{(absolutely} or \emph{uniformly) partially hyperbolic} if there exist
numbers $\lambda,\lambda',\mu$ and $\mu'$ with
$0<\lambda<1<\mu$ and $\lambda<\lambda'\leq \mu'<\mu$,
and an invariant decomposition $TM=E^s\oplus E^c\oplus E^u$
such that for any $n\ge 0$,
\begin{eqnarray*}
\|d_xf^nv\|  &\!\!\!\!\!  \le C\lambda^n\|v\|\ \   &\text{as} \ v\in E^s(x), \\
C^{-1}(\lambda')^n\|v\| \le \|d_xf^nv\|  &\!\!\!  \le C(\mu')^n\|v\|
               \ &\text{as} \   v\in E^c(x),\\
C^{-1}\mu^n\,\|v\| \leq  \|d_xf^nv\|&\ &\text{as} \  v\in E^u(x)
\end{eqnarray*}
hold for some number $C>0$.  The subspaces
$E_x^s, E_x^c$ and $E_x^u$ are called
\emph{stable, center} and \emph{unstable} subspace, respectively.
Via a change of Riemannian metric we always assume that $C=1$. Moreover, for simplicity of the notation, we assume that $\lambda=1/\mu$ $($otherwise, if $\lambda<1/\mu$, we can choose $\lambda^{'}\in(\lambda,1)$ satisfying $\lambda^{'}=1/\mu$, or if $\lambda>1/\mu$, we can choose $\mu^{'}\in(1,\mu)$ satisfying $\lambda=1/\mu^{'}$$)$. Generally, $E^u$ and $E^s$ are
integrable, and everywhere tangent to $\mathcal{W}^u$
and $\mathcal{W}^s$, called the \emph{unstable} and \emph{stable foliations}, respectively. If $E^c$ is
integrable, then the induced foliation $\mathcal{W}^c$  is called the \emph{center foliation}. For any $x\in M$, we denote the \emph{unstable, stable} and \emph{center manifolds} (also called \emph{leaves}) at $x$ by $W^u(x)$,  $W^s(x)$  and $W^c(x)$ respectively, and for $\varepsilon>0$, denote by $W^u_\varepsilon(x)$,  $W^s_\varepsilon(x)$  and $W^c_\varepsilon(x)$ the corresponding local leaves of size $\varepsilon$, i.e.,
$$W^{u}_{\varepsilon}(x)=\{y\in W^{u}(x)|d(f^{-n}(x),f^{-n}(y))<\varepsilon,\;n\geq0\},$$
$$W^{s}_{\varepsilon}(x)=\{y\in W^{s}(x)|d(f^{n}(x),f^{n}(y))<\varepsilon,\;n\geq0\},$$
$$W^{c}_{\varepsilon}(x)=\{y\in W^{c}(x)|d(f^{n}(x),f^{n}(y))<\varepsilon,\;n\in\mathbb{Z}\}.$$

For a partially hyperbolic diffeomorphism $f$ on $M$, there are two important properties: ``dynamical coherence" and ``plaque expansivity" on the foliations, which play important roles in the investigation of the dynamics of the system. The map $f$ is called \emph{dynamically coherent} if $E^{cu}:=E^c\oplus E^u$, $E^{c}$, and $E^{cs}:=E^c\oplus E^s$ are
integrable, and everywhere tangent to $\mathcal{W}^{cu}$ (which is called the \emph{center-unstable foliation}),
$\mathcal{W}^{c}$ and $\mathcal{W}^{cs}$  (which is called the \emph{center-stable foliation}),
respectively; and $\mathcal{W}^c$ and $\mathcal{W}^u$ are
subfoliations of $\mathcal{W}^{cu}$, while $\mathcal{W}^c$ and
$\mathcal{W}^s$ are subfoliations of $\mathcal{W}^{cs}$.
The map $f$ is called \emph{plaque expansive} with respect to the center foliation  $\mathcal{W}^c$ if there exists $\eta>0$ such that for any $\eta$-pseudo orbits $\{x_n\}_{n=-\infty}^{\infty}$ and $\{y_n\}_{n=-\infty}^{\infty}$ in which $f(x_n)$ and $f(y_n)$  lie in the center plaque $W^c_{\eta}(x_{n+1})$ and $W^c_{\eta}(y_{n+1})$ respectively, if $d(x_n, y_n)<\eta$ then $x_n$ and $y_n$ must lie in a common center plaque, i.e., $x_n\in W^c_{\eta}(y_n)$. Such an $\eta$ is called a \emph{plaque expansiveness constant} of $f$. We assume that $f$ has a compact center foliation $\mathcal{W}^c$, that is, $W^{c}(x)$ is compact for each $x$.
We say that the compact center foliation $\mathcal{W}^c$ is \emph{uniformly compact} if
$$
\sup\{\mbox{vol}(W^{c}(x)): x\in M\}<+\infty,
 $$
 where $\mbox{vol}(W^{c}(x))$ is the Riemannian volume restricted to the submanifold $W^{c}(x)$ of $M$. If the center foliation $\mathcal{W}^c$ is uniformly compact, then by Theorem 1 of \cite{Bonatti1}, the map $f$ is dynamically coherent, and by Proposition 13 of \cite{Pugh}, the map $f$ is plaque expansive.

In the remaining of this paper, we always assume that  the partially hyperbolic diffeomorphism $f$ has a uniformly compact center foliation except we declare in advance, and hence $f$ is dynamically coherent and plaque expansive. By \cite{Hu} and \cite{Hu1}, the quasi-shadowing property (without the assumption of uniform compactness on the center foliation), the center closing lemma and the center spectral decomposition hold for $f$. We can also see some similar results for quasi-shadowing property (or say center shadowing property)  in \cite{Bonatti1}, \cite{Carrasco11} and \cite{Tikhomirov}. Here we state them in the following versions.

\begin{QSP}[\cite{Hu1}]\label{QSP}
Let $f$ be a dynamically coherent partially hyperbolic diffeomorphism on $M$.
Then for any $\varepsilon>0,$ there exists $\delta\in(0,\varepsilon)$ such that for any $\delta$-pseudo orbit  $\xi=\{x_{k}\}_{-\infty}^{+\infty}$ with $\sup_{k\in \mathbb{Z}} d(f(x_{k}),\;x_{k+1})\leq \delta$
there is a sequence
$\{y_k\}_{k\in \mathbb{Z}}$ with $y_k\in W^c_{\varepsilon}(f(y_{k-1}))$ \emph{$\varepsilon$-shadowing} $\xi$ in the following sense:
$$
\max\{d(x_k,y_k):k\in \mathbb{Z}\}<\varepsilon.
 $$
 Moreover, if the center foliation is uniformly compact then reduce $\delta$ if necessary we can obtain
 $$
\max\{d_H(W^c(x_k), W^c(y_k)):k\in \mathbb{Z}\}<\varepsilon,
 $$
 where $d_{H}(\cdot,\cdot)$ denotes the Hausdorff distance given by
 $$
 d_{H}(A,B)=\max_{a\in A}\min_{b\in B}d(a,b)
 $$
 for closed subsets $A,B\subset M$. Note that
 $$
 d_H(W^c(x_k), W^c(y_k))=d_H(W^c(x_k), W^c(f^k(y_0))),
  $$
  so in this case we call the point $y_0$ \emph{center $\varepsilon$-shadows} the pseudo orbit $\xi$.
\end{QSP}

A center leaf $W^{c}(p)$ is said to be a \emph{periodic center leaf} with period $n\in\mathbb{N}$ if $W^{c}(p)=W^{c}(f^{n}(p)).$
Denote
$$
P^{c}(f)=\{p\in M : W^{c}(p)\mbox{ is a periodic center leaf}\},
$$
 $$
 \hat{P}^{c}(f)=\{W^{c}(p)\subset M : W^{c}(p)\mbox{ is a periodic center leaf}\}
 $$
 and
  $$
 \hat{P}_{n}^{c}(f)=\{W^{c}(p)\in \hat{P}^{c}(f) : W^{c}(p)\mbox{ is of periodic }n\},
  $$
where $n\in\mathbb{N}$.

For any set $A\subset M$, denote $A^{c}=\bigcup_{x\in A}W^c(x)$. For any set $A$ which consists of whole center leaves, i.e., $A=A^{c}$, we call it a \emph{center set} or a $\mathcal{W}^c$-\emph{saturated set}. We say that a center leaf $W^{c}(x)$ is \emph{center nonwandering} if for any open center set $U$ which contains $W^{c}(x)$, there is $n\geq1$ such that $f^{n}U\cap U\neq\emptyset$. We denote the \emph{center nonwandering set} of $f$ by
$$
\Omega^{c}(f)=\{x\in M: W^{c}(x) \mbox{ is center nonwandering}\}.
$$
It is easy to see that $\Omega^{c}(f)$ is a closed invariant center set.
Also we denote by $\Omega(f)$ the nonwandering set of $f$. Clearly, $\Omega(f)\subset\Omega^{c}(f)$.

\begin{remark}
Generally, $\Omega(f)$ and $\Omega^{c}(f)$ for a partially hyperbolic diffeomorphism $f$ are different. For example, let $f_{1}$ be an Anosov diffeomorphism on $M$ and $f_{2}$ the standard north-south map$($see, for example, section 5.3 of \cite{Walters}$)$ on the unit circle $S^{1}$ with $\Omega(f_{2})=\{N,S\}$. We can choose $f_{1}$ and $f_{2}$ appropriately such that the direct product $f=f_{1}\times f_{2}$ is a partially hyperbolic diffeomorphism on $M\times S^{1}$. We can see that $\Omega(f)=M\times\{N,S\}$ and $\Omega^{c}(f)=M\times S^{1}$.
\end{remark}

By the above Quasi-shadowing Lemma, we have the following Center Closing Lemma.

\begin{CCL}[\cite{Hu}]\label{CCL}
Let $f$ be a partially hyperbolic diffeomorphism on $M$ with a uniformly compact center foliation.
Then for any $\varepsilon>0,$ there exists $\delta\in(0,\varepsilon)$ such that for any $x\in M$ and $n\in\mathbb{N}$ with $d(x,f^{n}x)<\delta,$
there exists a periodic center leaf $W^{c}(p)$ of period $n$ satisfying
$d_H(W^c(p), W^c(x))<\varepsilon$.

Moreover, if $d_{H}(W^{c}(x),f^{n}(W^{c}(x)))<\delta,$ then there exists a periodic center
leaf $W^{c}(p)$ of period $n$ such that $d_H(W^c(p), W^c(x))<\varepsilon.$
\end{CCL}

 An invariant center set $A$ is said to be {\em center topologically transitive},
if for any two nonempty open center sets $U,V$ in $A$, there is $n\in \mathbb{N}$
such that
$$
f^n(U)\cap V\neq \emptyset.
$$
 An invariant center set $A$ is said to be {\em center topologically mixing},
if for any two nonempty open center sets $U,V$ in $A$,
there is $n_0\in \mathbb{N}$ such that
$$
f^n(U)\cap V\neq \emptyset, \qquad \forall n\geq n_0.
$$

\begin{CSD}[\cite{Hu}]\label{CSD}
Let $f:M\to M$ be a partially hyperbolic diffeomorphism
with a uniformly compact center foliation.
Then $\Omega^c(f)$ is a union of finite pairwise disjoint closed center sets
$$
\Omega^c(f)=\Omega^c_1\cup\cdots\cup\Omega^c_k.
$$
Moreover, for each $i=1,2, \cdots, k$, $\Omega^c_i$ satisfies that

\begin{enumerate}
\item[(a)] $f(\Omega^c_i)=\Omega^c_i$ and $f|_{\Omega^c_i}$
is center topologically transitive;

\item[(b)] $\Omega^c_i=X_{1,i}\cup \cdots \cup X_{n_i,i}$ such that the $X_{j,i}$ are disjoint closed center sets, $f(X_{j,i})=X_{j+1,i}$ for $1\leq j\leq n_i-1$, $f(X_{n_i,i})=X_{1,i}$,
and $f^{n_i}|_{X_{j,i}}$ is center topologically mixing.
\end{enumerate}
We call $\Omega^{c}_{i}, i=1,2,\cdots,k$, the \emph{center basic sets} of $f$.
\end{CSD}

\section{Center specification for compact locally maximal invariant center sets}

The notion of the specification property due to Bowen (\cite{Bowen}) has turned out to be a very
important notion in the study of ergodic theory of dynamical systems. We will generalize the specification property to the partially hyperbolic systems with uniformly compact center foliations in this section and will apply this property to investigate the entropy in the next section.

Let $f$ be a partially hyperbolic diffeomorphism on $M$  with a uniformly compact center foliation. An invariant center set $A$ is called \emph{locally maximal} if there is an open center set $V$ which contains $A$ such that $A=\bigcap^{\infty}_{-\infty}f^{n}(V)$. The main task of this section is to show if a compact locally maximal invariant center set $\Lambda$ is center topologically mixing, then $f|_{\Lambda}$ has the center specification property.

 Let $A$ be an invariant set of $f$. The following definition about specification derives from section 18.3 of \cite{Katok}. A \emph{specification} $S=(\tau,P)$ of $f$ on $A$ consists of a finite collection $\tau=\{I_{1},\ldots,I_{m}\}$ of finite intervals
$I_{i}=[a_{i},b_{i}]\subset\mathbb{Z}$ and a map $P:T(\tau):=\bigcup_{i=1}^{m}I_{i}\rightarrow A$
such that for each $I\in\tau$ and for all $t_{1},t_{2}\in I$ we have $f^{t_{2}-t_{1}}(P(t_{1}))=P(t_{2})$. The specification $S$ is said
to be $n$-\emph{spaced} if $a_{i+1}>b_{i}+n$ for all $i\in\{1,\ldots,m-1\}$ and the minimal such $n$ is called
the\emph{ spacing} of $S.$ We say that $S$ \emph{parameterizes} the collection $\{P_{I}|I\in\tau\}$ of orbit segments of $f$ on $A$. Thus a specification of $f$ on $A$ is a parameterized union of orbit segments $P_{I_{i}}$ of $f$ on $A$.
We let $T(S):=T(\tau)$ and $L(S):=L(\tau):=b_{m}-a_{1}$.
The specification $S$ is called \emph{center $\varepsilon$-shadowed} by $x\in M$ if
$$
d_H(f^{n}(W^{c}(x)),W^{c}(P(n)))\le\varepsilon,\;\;\forall n\in T(S).
$$

\begin{definition}
Let $A$ be an invariant set of $f$. The map $f$ is said to have the \emph{center specification property} on $A$ if for any $\varepsilon>0$ there exists an $N=N(\varepsilon)\in\mathbb{N}$ such that any $N$-spaced specification $S$ is center $\varepsilon$-shadowed by some $x\in A$, and such that moreover for
any integer $q\geq N+L(S)$ there is a period-$q$ point $p$ center $\varepsilon$-shadowing $S$.
\end{definition}

From Lemma 6.1 of \cite{Hu}, we have the following local product structure for $f$.

\begin{lemma}\label{localproduct}
Let $f$ be a partially hyperbolic diffeomorphism on $M$ with a uniformly compact center foliation. Then there exists $\varepsilon^*>0$ such that for any $0<\varepsilon<\varepsilon^*$ there exists $\delta=\delta(\varepsilon)>0$ satisfying the following property: for any $x, y\in M$ with
$d(x, y)<\delta$, if $x_1\in W^c(x)$ then there is $y_1\in W^c(y)$
such that $W^s_{\varepsilon}(x_1)\cap W^u_{\varepsilon}(y_1)$ contains exactly one point.
\end{lemma}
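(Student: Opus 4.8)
The plan is to obtain Lemma~\ref{localproduct} as a reformulation of the local product structure already available for $f$, namely Lemma 6.1 of \cite{Hu}, by assembling three ingredients. First, dynamical coherence furnishes the foliations $\mathcal{W}^{s},\mathcal{W}^{u},\mathcal{W}^{c},\mathcal{W}^{cs},\mathcal{W}^{cu}$, with $\mathcal{W}^{s}$ and $\mathcal{W}^{c}$ subfoliating $\mathcal{W}^{cs}$ and $\mathcal{W}^{u}$ and $\mathcal{W}^{c}$ subfoliating $\mathcal{W}^{cu}$; consequently every local center-stable (resp. center-unstable) leaf splits as a product of a local center plaque with a transverse local stable (resp. unstable) plaque. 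Second, the two coarse dominated splittings $E^{cs}\oplus E^{u}$ and $E^{s}\oplus E^{cu}$, with their integrating foliations, yield the classical local product structure: there is $\varepsilon^{*}>0$ and, for every $0<\varepsilon<\varepsilon^{*}$, a $\delta_{0}=\delta_{0}(\varepsilon)>0$ such that $d(a,b)<\delta_{0}$ forces each of $W^{cs}_{\varepsilon}(a)\cap W^{u}_{\varepsilon}(b)$ and $W^{s}_{\varepsilon}(a)\cap W^{cu}_{\varepsilon}(b)$ to consist of a single point (transversality plus continuity of the foliations); we also take $\varepsilon^{*}$ below a plaque-expansiveness constant. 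Third, since $\mathcal{W}^{c}$ is uniformly compact, the center leaves have uniformly bounded diameter and the stable and unstable holonomies between nearby center leaves are uniformly continuous (see \cite{Bohnet,Bonatti1,Carrasco}); in particular there is a modulus $\rho$ with $\rho(t)\to 0$ as $t\to 0$ such that $d(x,y)<t$ implies $d_{H}(W^{c}(x),W^{c}(y))<\rho(t)$.

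Given $0<\varepsilon<\varepsilon^{*}$, I would choose $\delta>0$ with $\rho(\delta)<\delta_{0}(\varepsilon)$, also small enough that the bounded holonomy distortions appearing below keep all auxiliary points inside the relevant local leaves. Let $x,y\in M$ with $d(x,y)<\delta$ and let $x_{1}\in W^{c}(x)$. By the modulus $\rho$ there is $y'\in W^{c}(y)$ with $d(x_{1},y')<\rho(\delta)<\delta_{0}(\varepsilon)$. Applying the local product structure of $E^{s}\oplus E^{cu}$ to the pair $(x_{1},y')$ produces a single point $z\in W^{s}_{\varepsilon}(x_{1})\cap W^{cu}_{\varepsilon}(y')$. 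Since $y'\in W^{c}(y)$, we have $W^{cu}_{\varepsilon}(y')\subset\mathcal{W}^{cu}(y)$; inside this center-unstable leaf the product structure (local center plaques transverse to local unstable plaques) lets us define $y_{1}$ as the unique point at which the local unstable leaf through $z$ meets the local center plaque of $y'$. Then $y_{1}\in W^{c}(y)$ and $z\in W^{u}(y_{1})$; since $z$ and $y_{1}$ lie within a controlled multiple of $\varepsilon$ of $y'$, one gets $z\in W^{u}_{\varepsilon}(y_{1})$ after absorbing that constant (formally, run the whole argument with $\varepsilon$ replaced by a fixed fraction of itself). This yields $z\in W^{s}_{\varepsilon}(x_{1})\cap W^{u}_{\varepsilon}(y_{1})$ with $y_{1}\in W^{c}(y)$, which is the existence assertion.

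For uniqueness, note that $W^{s}_{\varepsilon}(x_{1})\subset W^{cs}_{\varepsilon}(x_{1})$ and that $d(x_{1},y_{1})$ is bounded by $\rho(\delta)$ plus a multiple of $\varepsilon$, hence stays below $\delta_{0}(\varepsilon)$; therefore $W^{cs}_{\varepsilon}(x_{1})\cap W^{u}_{\varepsilon}(y_{1})$ is a single point by the local product structure of $E^{cs}\oplus E^{u}$, and the nonempty subset $W^{s}_{\varepsilon}(x_{1})\cap W^{u}_{\varepsilon}(y_{1})$ of it is then exactly one point. The one genuinely delicate issue, I expect, is uniformity in $x_{1}$: because $x_{1}$ ranges over the whole compact leaf $W^{c}(x)$, the trivialization radius of the five foliations, the sizes of the local leaves, and the moduli of continuity of the stable and unstable holonomies must all be bounded below independently of the base point. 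Guaranteeing this is precisely the role of the uniform compactness of $\mathcal{W}^{c}$ (uniformly bounded leaf diameter and holonomy); granting it, the remaining steps are the routine continuity-and-transversality bookkeeping that underlies Lemma 6.1 of \cite{Hu}.
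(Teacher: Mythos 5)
Your argument is essentially correct, but note that the paper does not prove this lemma at all: it is imported verbatim as Lemma 6.1 of \cite{Hu}, so there is no internal proof to match against. What you have written is the standard reconstruction from first principles, and it is the natural one: use uniform compactness and continuity of $\mathcal{W}^c$ to replace the hypothesis $d(x,y)<\delta$ by a bound on $d_H(W^c(x),W^c(y))$, so that every $x_1\in W^c(x)$ has a nearby $y'\in W^c(y)$; apply the local product structure of the transverse pair $(\mathcal{W}^s,\mathcal{W}^{cu})$ to get $z\in W^s_{\varepsilon}(x_1)\cap W^{cu}_{\varepsilon}(y')$; use dynamical coherence ($\mathcal{W}^c$ and $\mathcal{W}^u$ subfoliate $\mathcal{W}^{cu}$) to slide $z$ along its local unstable plaque onto the center plaque of $y'$, producing $y_1\in W^c(y)$ with $z\in W^u_{C\varepsilon}(y_1)$; and absorb the constant by running the argument at a fixed fraction of $\varepsilon$. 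The uniformity over $x_1\in W^c(x)$ that you flag as the delicate point is indeed exactly what uniform compactness supplies (uniformly bounded leaf volume, hence diameter, hence uniformly continuous holonomy), and the paper relies on the same mechanism elsewhere, e.g.\ in (\ref{etaH}) and (\ref{alphaH}). The net effect of your route versus the paper's is transparency at the cost of length: the paper buys brevity by citation, you make visible where each hypothesis (coherence, transversality, uniform compactness) is actually used.

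One small inaccuracy worth fixing: in the uniqueness step you assert that $d(x_1,y_1)$ is ``$\rho(\delta)$ plus a multiple of $\varepsilon$, hence stays below $\delta_0(\varepsilon)$.'' That implication is false in general, since $\delta_0(\varepsilon)$ is typically much smaller than $\varepsilon$. But the conclusion survives, because uniqueness only needs the ``at most one point'' half of the local product structure: for $\varepsilon<\varepsilon^*$ the local leaves $W^{cs}_{\varepsilon}(x_1)$ and $W^u_{\varepsilon}(y_1)$ are graphs over uniformly transverse subspaces with controlled slope, so they meet in at most one point regardless of how far apart $x_1$ and $y_1$ are; closeness is needed only for existence, which you have already established by construction. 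Rephrase the uniqueness paragraph accordingly and the proof is complete.
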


\begin{lemma}\label{dense}
Let $f$ and $\Lambda$ be as in Theorem \ref{centerspecification}. Then the periodic center leaves are dense in $\Lambda$, i.e., $\overline{P^{c}(f)|_{\Lambda}}=\Lambda$. Moreover, for any periodic center leaf $W^c(p)$ in $\Lambda$, we have  $W^{cu}(p)$ is dense in $\Lambda$, i.e., $\overline{W^{cu}(p)}\supset\Lambda$.
\end{lemma}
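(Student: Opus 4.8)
The plan is to follow the classical argument for topologically mixing locally maximal hyperbolic sets, with three substitutions: the Anosov closing lemma is replaced by the Center Closing Lemma, the hyperbolic local product structure by Lemma~\ref{localproduct}, and ordinary shadowing of orbit segments by the Hausdorff shadowing of center leaves furnished by the Quasi-shadowing Property. Throughout I use that for a uniformly compact center foliation the leaf map $y\mapsto W^{c}(y)$ is continuous in the Hausdorff metric, that $\Lambda$ being a center set satisfies $y\in\Lambda\iff W^{c}(y)\subset\Lambda$, and I fix the open center set $V_{0}$ with $\Lambda=\bigcap_{n\in\mathbb{Z}}f^{n}(V_{0})$ coming from local maximality.

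\emph{First assertion.} Fix $x\in\Lambda$ and $\varepsilon>0$ so small that the closed $\varepsilon$-neighbourhood of $\Lambda$ lies in $V_{0}$, and let $\delta=\delta(\varepsilon)\in(0,\varepsilon)$ come from the Center Closing Lemma. Let $U$ be the $\Lambda$-saturation of a small ball about $x$, an open center set in $\Lambda$ containing $x$; shrinking the ball, every center leaf meeting $U$ is within Hausdorff distance $\delta/2$ of $W^{c}(x)$. By center topological mixing there is $n\ge1$ with $f^{n}(U)\cap U\neq\emptyset$, so there is $y\in U$ with $f^{n}(y)\in U$, whence $d_{H}(W^{c}(y),f^{n}(W^{c}(y)))<\delta$ by the triangle inequality. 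Applying the ``moreover'' part of the Center Closing Lemma to $y$ (equivalently, the Quasi-shadowing Property to the $n$-periodic pseudo-orbit through $y$) gives a periodic center leaf $W^{c}(p)$ of period $n$ with $d_{H}(W^{c}(p),W^{c}(y))<\varepsilon$; since its whole orbit stays within $\varepsilon$ of $\{y,\dots,f^{n-1}y\}\subset\Lambda$, hence inside $V_{0}$, local maximality forces $W^{c}(p)\subset\Lambda$. Letting the ball shrink and $\varepsilon\to0$ produces periodic center leaves in $\Lambda$ Hausdorff-converging to $W^{c}(x)$, so $W^{c}(x)\subset\overline{P^{c}(f)|_{\Lambda}}$; as $x$ was arbitrary and $\overline{P^{c}(f)|_{\Lambda}}\subset\Lambda$ trivially, $\overline{P^{c}(f)|_{\Lambda}}=\Lambda$.

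\emph{Second assertion.} I would reduce it to the statement that every periodic center leaf $W^{c}(q)\subset\Lambda$ is contained in $\overline{W^{cu}(p)}$: then $\overline{W^{cu}(p)}$ is a closed set containing $P^{c}(f)|_{\Lambda}$, hence contains $\overline{P^{c}(f)|_{\Lambda}}=\Lambda$ by the first part. To prove the reduced claim, let $N$ be a common period of $W^{c}(p)$ and $W^{c}(q)$; since $f|_{\Lambda}$ is center topologically mixing, so is $f^{N}|_{\Lambda}$, hence $f^{N}|_{\Lambda}$ is center topologically transitive. Fix the local-product scale $\varepsilon'\in(0,\varepsilon^{*})$ of Lemma~\ref{localproduct}, and let $U,V$ be open center neighbourhoods of $W^{c}(p),W^{c}(q)$ in $\Lambda$ with $U$ contained in a $\delta(\varepsilon')$-ball about $W^{c}(p)$. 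Transitivity gives $z\in U$ and $k\ge1$ with $f^{Nk}(z)\in V$; choosing $p_{0}\in W^{c}(p)$ with $d(z,p_{0})<\delta(\varepsilon')$, Lemma~\ref{localproduct} applied at the point $z$ yields $p_{1}\in W^{c}(p)$ and the unique point $w\in W^{s}_{\varepsilon'}(z)\cap W^{u}_{\varepsilon'}(p_{1})$. Then $w\in W^{u}(p_{1})\subset W^{cu}(p)$, and since $f^{N}$ preserves $W^{c}(p)$ and, by dynamical coherence, the center-unstable leaf through it, $f^{Nk}(w)\in f^{Nk}(W^{cu}(p))=W^{cu}(p)$. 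As $w\in W^{s}_{\varepsilon'}(z)$ we have $d(f^{Nk}(w),f^{Nk}(z))<\varepsilon'$, while $f^{Nk}(z)\in V$ is close to $W^{c}(q)$; by continuity of the foliation, $W^{c}(f^{Nk}(w))$ is Hausdorff-close to $W^{c}(q)$. Letting $U,V$ shrink and $\varepsilon'\to0$ yields points of $W^{cu}(p)$ whose center leaves (which lie in $W^{cu}(p)$) Hausdorff-converge to $W^{c}(q)$, forcing $W^{c}(q)\subset\overline{W^{cu}(p)}$ and completing the reduction.

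The main difficulty I expect is not a single deep step but disciplined bookkeeping: one must constantly translate between the Hausdorff distance of center leaves and the ordinary distance of points (legitimate only because uniform compactness makes the leaf map continuous), and repeatedly invoke local maximality together with the smallness of the error to keep the newly produced periodic center leaves inside $\Lambda$ and the local-product point $w$ on $W^{cu}(p)$. A secondary point requiring care is the identity $f^{N}(W^{cu}(p))=W^{cu}(p)$, which rests on the $f$-invariance and dynamical coherence of $\mathcal{W}^{cu}$ and on $\mathcal{W}^{c}$ being a subfoliation of $\mathcal{W}^{cu}$ — all guaranteed by the uniform compactness of the center foliation.
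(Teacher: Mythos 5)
Your proposal is correct and follows essentially the same route as the paper: recurrence from center topological mixing plus the Center Closing Lemma for density of periodic center leaves, and the local product structure of Lemma \ref{localproduct} pushed forward by a high iterate (under which $W^{cu}(p)$ is invariant) for density of $W^{cu}(p)$. Your version is in places more careful than the paper's --- e.g.\ you invoke local maximality explicitly to keep the produced periodic center leaves inside $\Lambda$, and you approximate whole center leaves in Hausdorff distance rather than individual points --- but these are refinements of the same argument, not a different one.
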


\begin{proof}
Since $\Lambda$ is center topologically mixing, by the Center Closing Lemma, we obtain that for each $x\in \Lambda$ and any $\varepsilon>0$ there exists a periodic center leaf $W^c(p)$ such that $d_H(W^c(x), W^c(p))\le \varepsilon$. Therefore, any center leaf which lies in $\Lambda$ can be approximated by a sequence of  periodic center leaves, and hence the periodic center leaves are dense in $\Lambda$.

 Given a periodic center leaf $W^c(p)$ in $\Lambda$. We will prove $W^{cu}(p)$ is dense in $\Lambda$. Since $\overline{P^{c}(f)|_{\Lambda}}=\Lambda$,  we only need to prove that for any $q$ which lies in a periodic center leaf and any $\varepsilon>0$ we can find a point $p_0\in W^{cu}(p)$ such that $d(p_0, q)<\varepsilon$.
 Since the center foliation is uniformly compact and $\Lambda$ is center topologically mixing, for any open center sets $U$ and $V$ which contains $W^c(p)$ and $W^c(q)$ respectively, there exists $n_{0}\in\mathbb{N}$ such that $f^{n}(U)\cap V\neq\emptyset$ for any $n\geq n_{0}$. For the above $\varepsilon$, we take $\delta>0$ such that the corresponding result in Lemma \ref{localproduct} holds. Since the center foliation is continuous, we can require that $U$ and $V$ are both ``thin" enough such that
  \begin{equation}\label{Uinball}
  U\subset B_{d_H}\big(W^c(p),\delta\big)\;\;\text{and}\;\;V\subset B_{d_H}\big(W^c(q),\varepsilon\big).
  \end{equation}
Suppose $W^{c}(p)$ has period $m$, we take a number $n\in\mathbb{N}$ such that $mn\geq n_{0}$ and $f^{mn}(U)\cap V\neq\emptyset$. Hence we can choose a point $x\in \Lambda$ such that $W^{c}(x)\subset U$
and $f^{mn}(W^{c}(x))=W^{c}(f^{mn}(x))\subset V$. By (\ref{Uinball}), we can take a point $y\in W^{c}(f^{mn}(x))$ such that $d(y,q)<\varepsilon$.  Note that
$d_{H}(W^{c}(p),W^{c}(x))<\delta<\varepsilon$ and $f^{-mn}(y)$ lies in $W^{c}(x)$. If $W^c(x)=W^c(p)$ then $y\in W^{c}(f^{mn}(x))=W^{c}(f^{mn}(p))=W^{c}(p)$. Hence $p_0=y$ is a desired point. Now suppose $W^c(x)\neq W^c(p)$. Take $p'\in W^c(p)$ such that $d(p',f^{-mn}(y))<\delta$.
By Lemma \ref{localproduct}, $W^u_{\varepsilon}(p')\cap W^{s}_{\varepsilon}(f^{-mn}(y))$ contains exactly one point, say $p''$.  Then $p_{0}=f^{mn}(p'')$ is a desired point.
 \end{proof}

In the above lemma, we have shown that for any point lying in a periodic center leaf in $\Lambda$, its center-unstable manifold is dense in $\Lambda$.  Since the periodic center leaves are dense in $\Lambda$, we get that for each point in $\Lambda$, its center-unstable manifold is dense in $\Lambda$. Furthermore, the following lemma tells us that the above density is even uniform.

\begin{lemma}\label{uniformlydense}
Let $f$ and $\Lambda$ be as in Theorem \ref{centerspecification}. Then for any $\alpha>0$ there is $N\in\mathbb{N}$ such that for any $x,y\in\Lambda$ and $n\geq N$ we have $f^{n}(W^{c}(W^{u}_{\alpha}(x)))\cap W^{s}_{\alpha}(y)\neq\emptyset$, where $W^{c}(W^{u}_{\alpha}(x))=\cup_{x'\in W^{u}_{\alpha}(x)}W^{c}(x',f)$.
\end{lemma}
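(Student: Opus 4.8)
The plan is to prove the statement first for a single pair $(x,y)\in\Lambda\times\Lambda$, but in a \emph{robust} form---with a threshold $n_{0}=n_{0}(x,y)$ that works simultaneously for every pair in a fixed neighbourhood of $(x,y)$---and then to extract a uniform $N$ by compactness of $\Lambda\times\Lambda$. Throughout, $\alpha>0$ is given, $\varepsilon>0$ is chosen small (in particular $2\varepsilon<\alpha$ and $\varepsilon<\varepsilon^{*}$), $\delta=\delta(\varepsilon)$ is the constant of Lemma \ref{localproduct}, and $n_{*}$ is fixed so that $\mu^{-n}\varepsilon$ is negligibly small for $n\ge n_{*}$, where $\mu>1$ is the unstable expansion rate.

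For the pointwise step I would fix $(x,y)$ and, using continuity of $\mathcal{W}^{c}$, choose thin open center sets $U\ni W^{c}(x)$, $V\ni W^{c}(y)$ in $\Lambda$ together with a small radius $\rho>0$ so that any center leaf contained in $U$ is $\delta$-Hausdorff-close to $W^{c}(x')$ for every $x'$ in the $\rho$-ball of $x$ (and similarly for $V$); center topological mixing then gives $n_{0}$ with $f^{n}(U)\cap V\ne\emptyset$ for all $n\ge n_{0}$. Now take any $(x',y')$ in these $\rho$-balls and any $n\ge\max\{n_{0},n_{*}\}$, pick $z\in\Lambda$ with $W^{c}(z)\subset U$ and $W^{c}(f^{n}z)\subset V$, and apply the local product structure twice. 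First, since some point of $W^{c}(f^{n}z)$ is close to $y'$, Lemma \ref{localproduct} yields $w\in W^{s}_{\varepsilon}(y')\cap W^{u}_{\varepsilon}(\zeta')$ with $\zeta'\in W^{c}(f^{n}z)$; then $f^{-n}w$ is very close to $W^{c}(z)$ (unstable contraction under $f^{-n}$), hence close to a suitable point of $W^{c}(x')$, and a second application of Lemma \ref{localproduct} produces $q\in W^{s}_{\varepsilon}(f^{-n}w)\cap W^{u}_{\varepsilon}(\xi)$ with $\xi\in W^{c}(x')$.

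This $q$ is the desired point. Since $q\in W^{s}_{\varepsilon}(f^{-n}w)$ and $w\in W^{s}_{\varepsilon}(y')$, forward iteration contracts the stable direction, so $f^{n}(q)$ lies on the same stable leaf as $y'$ and $f^{n}(q)\in W^{s}_{\alpha}(y')$. Since $q\in W^{u}_{\varepsilon}(\xi)$ with $\xi\in W^{c}(x')$, we have $q\in W^{cu}(x')$, hence $W^{c}(q)\subset W^{cu}(x')$; as $W^{c}(q)$ stays within unstable-distance $\varepsilon$ of $W^{c}(x')$, the unstable leaf $W^{u}(x')$ crosses $W^{c}(q)$ at a point $x''$, and $x''\in W^{u}_{\alpha}(x')$ provided $\varepsilon$ was small---this last point uses the uniform continuity of the center holonomy along unstable leaves, which is available because $\mathcal{W}^{c}$ is uniformly compact and $f$ is dynamically coherent (cf.\ \cite{Hu}, \cite{Bonatti1}). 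Thus $q\in W^{c}(x'')\subset W^{c}(W^{u}_{\alpha}(x'))$, so $f^{n}(q)\in f^{n}(W^{c}(W^{u}_{\alpha}(x')))\cap W^{s}_{\alpha}(y')$. Finally, the product $\rho$-ball neighbourhoods cover the compact set $\Lambda\times\Lambda$; taking a finite subcover around pairs $(x_{i},y_{i})$ and setting $N:=\max_{i}\max\{n_{0}(x_{i},y_{i}),n_{*}\}$ completes the proof.

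I expect the main obstacle to be exactly the uniformity of $N$: center topological mixing only yields a time threshold after a pair of open center sets has been fixed, never one uniform over pairs of points. This forces the pointwise construction to be carried out ``with room''---for all pairs in a fixed neighbourhood at once, arranged by centering the mixing neighbourhoods $U,V$ on slightly fattened center leaves---so that a compactness argument on $\Lambda\times\Lambda$ can finish it. A secondary technical point, and the one place where uniform compactness of $\mathcal{W}^{c}$ is genuinely used, is bounding the distortion of the unstable holonomy so that $W^{c}(q)$ meets $W^{u}_{\alpha}(x')$ and not merely $W^{u}(x')$.
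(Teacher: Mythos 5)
Your proof is correct in outline, but it follows a genuinely different route from the paper's. You get uniformity by covering the compact set $\Lambda\times\Lambda$ with product neighbourhoods, on each of which the ``for all $n\ge n_0$'' clause of center topological mixing (applied to a pair of thin saturated neighbourhoods $U\supset W^c(x)$, $V\supset W^c(y)$) does all the work; a finite subcover then yields one $N$. The paper instead never invokes mixing directly in this lemma: it takes an $\eta$-dense finite set of points $p_k$ on periodic center leaves (Lemma~\ref{dense}), uses density of each $W^{cu}(p_k)$ to find a time $m_k t_k$ after which $f^{m t_k}(W^c(W^u_\eta(p_k)))$ is $\delta(\eta)$-dense \emph{for all} $m\ge m_k$ --- the periodicity of $W^c(p_k)$ making these sets nested in $m$ --- and sets $N=\prod_k m_k t_k$; uniformity over all $n\ge N$ is then recovered by the inclusion $f^{n}(W^{c}(W^{u}_{\alpha}(x)))\supset f^{N}(W^{c}(W^{u}_{\alpha}(f^{n-N}(x))))$. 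Your version is arguably more elementary (it needs only Lemma~\ref{localproduct}, mixing, and compactness, and dispenses with periodic leaves and the nesting trick), at the cost of a less explicit $N$; the paper's version reuses Lemma~\ref{dense}, which it needs anyway, and localizes all approximation around finitely many periodic leaves. Two small cautions. First, make sure your neighbourhoods are chosen so that \emph{every} leaf contained in $U$ is $\delta$-Hausdorff-close to $W^c(x')$ for every $x'$ in the $\rho$-ball (you say this, and it does follow from uniform continuity of $w\mapsto W^c(w)$ in the Hausdorff metric, which the paper also uses in (\ref{etaH})). Second, the step where you pass from $q\in W^u_{\varepsilon}(\xi)$ with $\xi\in W^c(x')$ to $q\in W^c(W^u_\alpha(x'))$ via unstable holonomy inside $W^{cu}(x')$ is a real issue, but the paper's own proof makes exactly the same identification (it asserts $v\in W^c(W^u_{\alpha/2}(x))$ although Lemma~\ref{localproduct} as stated only places $v$ in $W^u_{\alpha/2}(y_1)$ for some $y_1\in W^c(x)$), so you are no less rigorous here than the source; just be explicit that this relies on the completeness and uniform continuity of the unstable holonomy for uniformly compact center foliations.
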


\begin{proof}
By Lemma \ref{localproduct}, we have that there is a $\varepsilon^{*}>0$ such that for any $0<\varepsilon<\varepsilon^*$ there exists $\delta=\delta(\varepsilon)>0$ satisfies the following property: for any $x, y\in M$ with
$d(x, y)<\delta$, if $x_1\in W^c(x)$ then there is $y_1\in W^c(y)$
such that $W^s_{\varepsilon}(x_1)\cap W^u_{\varepsilon}(y_1)$ contains exactly one point.

Given $\alpha>0$. Since the center foliation is continuous and uniformly compact, there exists $0<\eta<\min\{\varepsilon^{*}, \alpha/2, \delta(\alpha/2)/3\}$ such that
\begin{equation}\label{etaH}
 d(x,y)<\eta\Longrightarrow d_H(W^c(x), W^c(y))<\delta(\alpha/2)/3.
\end{equation}
To choose $N$ take a $\eta$-dense set $\{p_k : k=1,\ldots,r\}$ of points each of which lies in a periodic center leaf (with period $t_{k}$).
By Lemma \ref{dense}, for each $1\le k\le r$, $W^{cu}(p_k)$ is dense in $\Lambda$, and hence there exists $m_k$ such that $f^{mt_{k}}(W^{c}(W^{u}_{\eta}(p_{k})))$ is $\delta(\eta)$-dense for all $m\geq m_{k}$, that is, for any $x'\in \Lambda$ there is $x''\in f^{mt_{k}}(W^{c}(W^{u}_{\eta}(p_{k})))$ such that $d(x', x'')<\delta(\eta)$. Let $N=\prod^{r}_{k=1}m_{k}t_{k}$ and note that $f^{N}(W^{c}(W^{u}_{\eta}(p_{k})))$ is $\delta(\eta)$-dense for all $k$.

Now we show that $N$ is as desired. For $x,y\in\Lambda$ take $j$ such that $d(x,p_{j})<\eta$ (hence by (\ref{etaH}), $d_H(W^c(x), W^c(p_j))<\delta(\alpha/2)/3$ ), $z\in f^{N}(W^{c}(W^{u}_{\eta}(p_{j})))$ such that $d(y,z)\leq\delta(\eta)$,  and $w\in W^{u}_{\eta}(z)\cap W^{s}_{\eta}(y)$.  Clearly,
\begin{eqnarray*}
d(f^{-N}(w),W^{c}(p_j))&\le& d(f^{-N}(w), f^{-N}(z))
+d(f^{-N}(z),W^{c}(p_j))\\
 &\le& \eta+\delta(\alpha/2)/3<2\delta(\alpha/2)/3.
\end{eqnarray*}
Therefore,
by triangle inequality we have
$$
d(f^{-N}(w),W^{c}(x))\leq d(f^{-N}(w),W^{c}(p_j))+d_H(W^c(p_j), W^c(x))<\delta(\alpha/2).
 $$
 So there exists $v\in W^{c}(W^{u}_{\alpha/2}(x))\cap W^{s}_{\alpha/2}(f^{-N}(w))$ by Lemma \ref{localproduct} and $$
 f^{N}(v)\in f^{N}(W^{c}(W^{u}_{\alpha/2}(x)))\cap W^{s}_{\alpha/2}(w)\subset f^{N}(W^{c}(W^{u}_{\alpha}(x)))\cap W^{s}_{\alpha}(y)\neq\emptyset
 $$
 since $w\in W^{s}_{\eta}(y)$ and $\eta\le\alpha/2$. For $x,y\in\Lambda, n\geq N$ note that
 $$f^{n}(W^{c}(W^{u}_{\alpha}(x)))\cap W^{s}_{\alpha}(y)\supset f^{N}(W^{c}(W^{u}_{\alpha}(f^{n-N}(x))))\cap W^{s}_{\alpha}(y)\neq\emptyset.
 $$
This completes the proof of this lemma.
\end{proof}

\begin{proof}[Proof of Theorem \ref{centerspecification}]
Given $0<\varepsilon <\varepsilon^*$ (recall that $\varepsilon^*$ comes from Lemma \ref{localproduct}). We assume that the local stable and unstable manifolds satisfy that
\begin{equation}\label{condlambda}
x\in W^s_{\eta}(y)\Longrightarrow f(x)\in W^s_{\lambda\eta}(f(y))\;\mbox{ and }\;x'\in W^u_{\eta}(y')\Longrightarrow f^{-1}(x')\in W^u_{\lambda\eta}(f^{-1}(y'))
\end{equation}
for any $0<\eta<\varepsilon$.
 Take $\alpha<\varepsilon/2$  such that
\begin{equation}\label{alphaH}
 d(x,y)<2\alpha\Longrightarrow d_H(W^c(x), W^c(y))<\varepsilon/2.
\end{equation}
For this $\alpha$, take the corresponding $N$ obtained in Lemma \ref{uniformlydense}. Increase $N$ if necessary such that
\begin{equation}\label{bigN}
\lambda^{N}<1/2,
\end{equation}
 where
$\lambda$ is the contraction rate in the definition of partially hyperbolicity.

Given an $N$-spaced specification $S=(\tau,P)$ of $f$ on $\Lambda$  in which $\tau=\{I_{k}=[a_k,b_k]: 1\le k\le m\}$ and $P$  is the corresponding map on $T(\tau)$.
We let $x_{1}=f^{-a_{1}}(P(a_{1}))$ and define $x_{2},x_{3},\ldots,x_{m}$ as follows: Given $x_{k}$ there exists, by Lemma \ref{uniformlydense}, an $x_{k+1}$ such that
$$
f^{a_{k+1}}(x_{k+1})\in f^{a_{k+1}-b_{k}}(W^{c}(W^{u}_{\alpha}(f^{b_{k}}(x_{k}))))\cap W^{s}_{\alpha}(P(a_{k+1}))
$$
since $a_{k+1}-b_{k}\geq N$ by assumption.

In the following, we will show that $S$ can be center $\varepsilon$-shadowed by $x :=x_{m}$. Since for each $k\in[1,m]$, $f^{a_{k}}(x_{k})\in W^{s}_{\alpha}(P(a_{k}))$ by construction,
$$
d(f^{n}(x_{k}),P(n))=d(f^{n-a_{k}}(f^{a_{k}}(x_{k})),f^{n-a_{k}}(P(a_{k})))\leq\alpha,
$$
and hence by (\ref{alphaH}) we have
$$
d_{H}(W^{c}(f^{n}(x_{k})),W^{c}(P(n)))\leq  \varepsilon/2
$$
for $n\in I_{k}$.
Once we prove
\begin{equation}\label{shadow1}
d_{H}(W^{c}(f^{n}(x)),W^{c}(f^{n}(x_{k})))<\varepsilon/2
\end{equation}
for all $n\in I_{k}, k\in[1,m]$, then we get the desired result by the triangle inequality.

Now we show (\ref{shadow1}). For $k=m$ and $n\in I_m$,
(\ref{shadow1}) holds obviously since $x=x_m$. So we begin at $k=m-1$ and $n\in I_{m-1}$.
Since $f^{b_{m-1}}(x)\in W^{c}(W^{u}_{\alpha}(f^{b_{m-1}}(x_{m-1})))$ by construction, we can take $x'_{m-1}\in W^c(x)$
such that
$$f^{b_{m-1}}(x'_{m-1})\in W^{u}_{\alpha}(f^{b_{m-1}}(x_{m-1}))\cap W^{c}(f^{b_{m-1}}(x)),$$
then by (\ref{alphaH}) we have
$$
d_{H}(W^{c}(f^{b_{m-1}}(x'_{m-1})),W^{c}(f^{b_{m-1}}(x_{m-1})))<\varepsilon/2,
$$
and hence by (\ref{condlambda}), we have
\begin{equation}\label{m-1}
d_{H}(W^{c}(f^{n}(x'_{m-1})),W^{c}(f^{n}(x_{m-1})))<\varepsilon/2
\end{equation}
for $n\in I_{m-1}$ and
$$
f^{a_{m-1}}(x'_{m-1})\in W^{u}_{\alpha\lambda^{b_{m-1}-a_{m-1}}}(f^{a_{m-1}}(x_{m-1}))\cap W^{c}(f^{a_{m-1}}(x)).
$$
Now consider $k=m-2$ and $n\in I_{m-2}$. Note that
$$f^{b_{m-2}}(x'_{m-1})\in W^{c}(W^{u}_{\alpha\lambda^{b_{m-1}-b_{m-2}}}(f^{b_{m-2}}(x_{m-1})))$$
by (\ref{condlambda}), and $f^{b_{m-2}}(x_{m-1})\in W^{c}(W^{u}_{\alpha}(f^{b_{m-2}}(x_{m-2})))$, we have
$$
f^{b_{m-2}}(x'_{m-1})\in W^{c}(W^{u}_{\alpha+\alpha\lambda^{b_{m-1}-b_{m-2}}}(f^{b_{m-2}}(x_{m-2}))).
$$
Take $x'_{m-2}\in W^c(x'_{m-1})$  such that
$$f^{b_{m-2}}(x'_{m-2})\in W^{u}_{\alpha+\alpha\lambda^{b_{m-1}-b_{m-2}}}(f^{b_{m-2}}(x_{m-2}))\cap W^{c}(f^{b_{m-2}}(x'_{m-1})),$$
 then by (\ref{alphaH}) and (\ref{bigN}) (note that $b_{m-1}-b_{m-2}>N$) we have
$$
d_{H}(W^{c}(f^{b_{m-2}}(x'_{m-2})),W^{c}(f^{b_{m-2}}(x_{m-2})))<\varepsilon/2.
$$
and hence by (\ref{condlambda}), we have
\begin{equation}\label{m-2}
d_{H}(W^{c}(f^{n}(x'_{m-2})),W^{c}(f^{n}(x_{m-2})))<\varepsilon/2
\end{equation}
for $n\in I_{m-2}$.
Inductively, we get $x'_{1}, x'_{2}, \ldots, x'_{m-3}$ such that for each $3\le i\le m-1$, we have
$x'_{m-i}\in W^c(x'_{m-i+1})$  such that
$$
f^{b_{m-i}}(x'_{m-i})\in W^{u}_{\alpha(1+\sum_{j=1}^{i-1}\lambda^{b_{m-j}-b_{m-i}})}(f^{b_{m-i}}(x_{m-i}))\cap W^{c}(f^{b_{m-i}}(x'_{m-i+1}))
$$
and
\begin{equation}\label{m-i}
d_{H}(W^{c}(f^{n}(x'_{m-i})),W^{c}(f^{n}(x_{m-i})))<\varepsilon/2.
\end{equation}
for $n\in I_{m-i}$.
Note that
$$
W^{c}(x'_{1})=W^{c}(x'_{2})=\cdots=W^{c}(x'_{m-1})=W^{c}(x),
$$
by (\ref{m-1}), (\ref{m-2}) and (\ref{m-i}), we have $d_{H}(W^{c}(f^{n}(x)),W^{c}(f^{n}(x_{k})))<\varepsilon/2$
for all $n\in I_{k}, k\in[1,m]$, i.e., (\ref{shadow1}) holds.

Now we show that for any $q\geq N+L(S)$ there is a period-$q$ center leaf in $\Lambda$ center $\varepsilon$-shadowed $S$.
By the Center Closing Lemma, there exists $0<\delta<\varepsilon/2$ such that for any $x\in M$ and $n\in\mathbb{N}$ with $d_H(W^c(x),W^c(f^{n}(x)))<\delta$,
there exists a periodic center leaf $W^{c}(p)$ of period $n$ satisfying
\begin{equation}\label{shadow2}
d_H(W^c(f^{i}(p)),W^c(f^{i}(x)))\leq\varepsilon/2,\;\;\;\forall\;1\le i\le n.
\end{equation}
As we discussed above, we increase $N$ if necessary such that any $N$-spaced specification can be center $\delta/2$-shadowed by some point. We define another specification $S'=(\tau',P')$ with
$\tau'=\tau\cup\{\{a_{1}+q\}\}$ and $P'|_{T(\tau)}=P, P'(a_{1}+q)=P(a_{1}),$ which is clearly
$N$-spaced. We thus obtain a point $x' :=f^{a_{1}}(x)\in \Lambda$ such that
\begin{eqnarray*}
&&d_{H}(W^{c}(x'),W^{c}(f^{q}(x')))\\
&\leq& d_{H}(W^{c}(x'),W^{c}(P(a_{1})))+d_{H}(W^{c}(f^{q}(x')),W^{c}(P(a_{1})))\leq\delta.
\end{eqnarray*}
Therefore, by (\ref{shadow2}), there exists a periodic center leaf $W^{c}(p)$ of period $q$ such that
$$
d_{H}(W^{c}(f^{n+a_{1}}(p)),W^{c}(f^{n}(x')))\leq\varepsilon/2
$$
 for all $n\in [0,q]$.
Since $\Lambda$ is locally maximal, we get $W^{c}(p)\subset\Lambda$.  By the triangle inequality, the specification $S$ is center $\varepsilon$-shadowed the period-$q$ center leaf $W^{c}(p)$.

This completes the proof of this theorem.
\end{proof}

\section{Entropy and Periodic Center Leaves}

In this section we shall use the center specification property to investigate the relationship among topological entropy, entropy of the center foliation and the growth rate of periodic center leaves for partially hyperbolic diffeomorphisms with a uniformly compact center foliation.

Let $f:M\rightarrow M$ be a partially hyperbolic diffeomorphism. Denote
$$
p^{c}(f) =\limsup_{n\rightarrow\infty}\frac{1}{n}\log\mbox{card}(\hat{P}^{c}_{n}(f))
$$
and
$$
h(f,\mathcal{W}^{c})=\lim_{\varepsilon\longrightarrow0}\limsup_{n\rightarrow\infty}\frac{1}{n}\log s(\varepsilon, n, W^{c}, f),
$$
where $s(\varepsilon, n, W^{c}, f)=\max_{x\in M}s(\varepsilon, n, W^{c}(x), f)$ in which $s(\varepsilon, n, W^{c}(x), f)$ is the largest cardinality of
any $(\varepsilon, n, f)$-separated set of $W^{c}(x)$ (See, for example, section 7.2 of \cite{Walters} for the precise definition of $(\varepsilon, n, f)$-separated set).

\begin{proposition}\label{expansivecase}
Let $f:M\rightarrow M$ be a partially hyperbolic diffeomorphism. If $f$ is plaque expansive, then
\begin{equation}\label{entropyinequality1}
p^{c}(f)\leq h(f).
\end{equation}
\end{proposition}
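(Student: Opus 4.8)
The plan is to pass to the leaf space of $\mathcal{W}^c$ and reduce the statement to the classical fact that an expansive homeomorphism has at most exponentially many periodic points of each period. Since $\mathcal{W}^c$ is uniformly compact (hence continuous, with leaves of uniformly bounded diameter), the set $X:=\{W^c(x):x\in M\}$ equipped with the symmetrized Hausdorff distance $\rho(L,L'):=\max\{d_H(L,L'),d_H(L',L)\}$ is a compact metric space, the projection $\pi\colon M\to X$, $\pi(x)=W^c(x)$, is continuous and surjective, and $\bar f\colon X\to X$, $\bar f(W^c(x)):=W^c(f(x))$, is a well-defined homeomorphism with $\pi\circ f=\bar f\circ\pi$. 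Thus $(X,\bar f)$ is a topological factor of $(M,f)$, so $h(\bar f)\le h(f)$. Moreover $W^c(p)=W^c(f^{n}(p))$ if and only if $\bar f^{n}(\pi(p))=\pi(p)$, and distinct leaves are distinct points of $X$, so $\mathrm{card}(\hat P^{c}_{n}(f))$ coincides with the number of period-$n$ points of $\bar f$, whence $p^{c}(f)=p(\bar f)$. It therefore suffices to prove $p(\bar f)\le h(\bar f)$, and for this it is enough to show $\bar f$ is expansive.

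This is the crux: deducing expansivity of $\bar f$ from plaque expansivity of $f$. Let $\eta$ be a plaque expansiveness constant of $f$ and fix $c>0$ small enough that $(\mathrm{Lip}\,f+1)c<\eta$ and, using the continuity and uniform compactness of $\mathcal{W}^c$, that any two points of a single center leaf at ambient distance less than $(\mathrm{Lip}\,f+1)c$ lie in a common center plaque of size $\eta$. Suppose $L,L'\in X$ satisfy $\rho(\bar f^{k}L,\bar f^{k}L')<c$ for all $k\in\mathbb Z$; I claim $L=L'$. Fix $x\in L$ and, for each $k$, choose $u_k\in f^{k}(L')$ with $d(f^{k}x,u_k)<c$. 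Then $u_k$ and $f(u_{k-1})$ lie in the same leaf $f^{k}(L')$, and $d(f(u_{k-1}),u_k)\le d(f(u_{k-1}),f^{k}x)+d(f^{k}x,u_k)<(\mathrm{Lip}\,f+1)c$, so $f(u_{k-1})\in W^{c}_{\eta}(u_k)$; hence $\{u_k\}_{k\in\mathbb Z}$ is an $\eta$-pseudo orbit respecting the center plaques, as is the genuine orbit $\{f^{k}x\}_{k\in\mathbb Z}$, and the two stay within $\eta$. Plaque expansivity gives $f^{k}x\in W^{c}_{\eta}(u_k)$ for all $k$; taking $k=0$ yields $x\in W^{c}(u_0)=L'$. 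Since $x\in L$ was arbitrary, $L\subseteq L'$, hence $L=L'$, so $\bar f$ is expansive with expansivity constant $c$. (Equivalently, this says the center foliation is expansive, in the sense that distinct center leaves cannot stay $\rho$-close for all times; the same fact can be extracted from the treatment of plaque expansivity in \cite{Pugh} or from the techniques of \cite{Hu}, \cite{Hu1}.)

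Finally I would invoke the standard arguments for expansive homeomorphisms. Recalling that $h(g)=\lim_{\varepsilon\to0}\limsup_{n}\frac1n\log s(\varepsilon,n,g)$: if $\bar f^{n}(L)=L$ and $\bar f^{n}(L')=L'$ with $L\ne L'$, then $k\mapsto\rho(\bar f^{k}L,\bar f^{k}L')$ is $n$-periodic, so it already exceeds the expansivity constant $c$ for some $k\in\{0,\dots,n-1\}$; thus the set of period-$n$ points of $\bar f$ is $(c,n,\bar f)$-separated and has cardinality at most $s(c,n,\bar f)$. Hence $p(\bar f)\le\lim_{\varepsilon\to0}\limsup_{n}\frac1n\log s(\varepsilon,n,\bar f)=h(\bar f)$, and combining with $p^{c}(f)=p(\bar f)$ and $h(\bar f)\le h(f)$ gives $p^{c}(f)\le h(f)$.

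The step I expect to be the main obstacle is the second paragraph: converting the pseudo-orbit formulation of plaque expansivity into genuine expansivity of the induced map on the leaf space requires the foliation-geometry input supplied by uniform compactness — bounded leaf diameter, and control of the center plaques by the ambient metric on small scales — and it is the only place where more than soft arguments is needed. Everything afterwards (the factor entropy inequality and the periodic-point count for expansive homeomorphisms) is routine.
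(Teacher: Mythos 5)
Your route is genuinely different from the paper's. You pass to the leaf space $X=M/\mathcal{W}^{c}$ with the (symmetrized) Hausdorff metric, identify $p^{c}(f)$ with the periodic-point growth rate of the induced homeomorphism $\bar f$, and combine the factor inequality $h(\bar f)\le h(f)$ with the classical bound $p(g)\le h(g)$ for expansive $g$. The paper instead works directly in $M$: it picks one point $p_{i}$ from each period-$n$ center leaf and asserts that $\{p_{i}\}$ is an $(\eta,n,f)$-separated set, $\eta$ being a plaque expansiveness constant, so that $\mathrm{card}(\hat{P}^{c}_{n}(f))\le s(\varepsilon,n,f)$ for $\varepsilon<\eta$. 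Your reduction is structurally appealing (on $X$ the periodic leaves become genuine periodic points, so the standard ``$\mathrm{Fix}(g^{n})$ is $(n,c)$-separated'' argument applies verbatim), but everything hinges on your second paragraph, and that is where there is a genuine gap.

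The gap is the choice of $c$ such that ``any two points of a single center leaf at ambient distance less than $(\mathrm{Lip}\,f+1)c$ lie in a common center plaque of size $\eta$.'' No such $c$ need exist. Uniform compactness bounds the volume (hence the diameter) of the leaves but does not prevent a center leaf from passing close to itself: in the standard examples with nontrivial finite holonomy, a leaf near a short periodic leaf contains pairs of points at arbitrarily small ambient distance whose leafwise distance is bounded away from zero, and for such pairs the implication fails for every $c>0$. Consequently the sequence $\{u_{k}\}$, obtained by choosing $u_{k}\in f^{k}(L')$ near $f^{k}(x)$ independently for each $k$, need not satisfy $f(u_{k-1})\in W^{c}_{\eta}(u_{k})$: the two points do lie on the same leaf and are ambient-close, but they may be ``holonomy partners'' rather than plaque-close, so the hypothesis of plaque expansivity is not verified. (With the paper's literal definition of $W^{c}_{\eta}$, which requires the full orbits to stay $\eta$-close, the needed implication is even further out of reach of ``continuity and uniform compactness.'') Expansivity of the quotient dynamics is in fact true in this setting, but extracting it from plaque expansivity requires a local product structure or holonomy argument to choose the $u_{k}$ coherently along plaques, which is precisely the foliation-geometry input your sketch defers. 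The remaining steps of your proposal --- the factor inequality, the identification $p^{c}(f)=p(\bar f)$, and the periodic-point count for expansive homeomorphisms --- are correct.
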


\begin{proof}
Assume $f$ is plaque expansive with the plaque expansiveness constant $\eta$. Given $n\in \mathbb{N}$. Let
$\mathcal{F}=\{W^c(p_i):f^n(W^c(p_i))=W^c(p_i), i\in I\}$ be the family of pairwise different period-$n$ center leaves.  Since $\eta$ is the plaque expansiveness constant of $f$, the set $A=\{p_i: i\in I\}$ is an $(\eta, n, f)$-separated set. Therefore, card$(\hat{P}^{c}_{n}(f))\le s(\varepsilon, n, f)$ for any $0<\varepsilon<\eta$, and hence (\ref{entropyinequality1}) holds.
\end{proof}

Denote $\mathcal{M}(M,f)$ the space of invariant measures of $f$ with weak$^*$ topology.

\begin{lemma}\label{support}
Let $f$ be as in Theorem \ref{entropyrelation}. For any $\mu\in \mathcal{M}(X,f)$, we have
$$
\mbox{\emph{supp}}\mu\subset\overline{P^{c}(f)}.
$$
\end{lemma}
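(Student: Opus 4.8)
The plan is to combine the Poincar\'e recurrence theorem with the Center Closing Lemma, the point being that a recurrent point of $f$ is automatically in the closure of $P^{c}(f)$. So the whole statement reduces to: (i) $\mathrm{supp}\,\mu$ is contained in the closure of the set $R$ of recurrent points, and (ii) $R\subset\overline{P^{c}(f)}$.

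For (i) I would argue as usual. Fix $\mu\in\mathcal{M}(M,f)$ and recall that $\mathrm{supp}\,\mu$ is the smallest closed subset of $M$ of full $\mu$-measure. Choosing a countable base $\{U_{k}\}$ of open sets, Poincar\'e recurrence applied to each $U_{k}$ shows that the set $B_{k}$ of points of $U_{k}$ that never return to $U_{k}$ has $\mu(B_{k})=0$; hence $R:=M\setminus\bigcup_{k}B_{k}$ is Borel with $\mu(R)=1$, and every $x\in R$ satisfies $\liminf_{n\to\infty}d(f^{n}x,x)=0$. Since $\overline{R}$ is closed and $\mu(\overline{R})=1$, we get $\mathrm{supp}\,\mu\subset\overline{R}$, so it suffices to prove $R\subset\overline{P^{c}(f)}$. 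For (ii), fix $x\in R$ and $\varepsilon>0$, and let $\delta=\delta(\varepsilon)\in(0,\varepsilon)$ be as in the Center Closing Lemma. Recurrence of $x$ gives some $n\geq1$ with $d(x,f^{n}x)<\delta$, and the lemma then produces a periodic center leaf $W^{c}(p)$ of period $n$ with $d_{H}(W^{c}(p),W^{c}(x))<\varepsilon$. Since $x\in W^{c}(x)$, this yields a point $q\in W^{c}(p)$ with $d(q,x)<\varepsilon$; as $W^{c}(q)=W^{c}(p)$ is periodic, $q\in P^{c}(f)$, so $d(x,P^{c}(f))<\varepsilon$. Letting $\varepsilon\to0$ gives $x\in\overline{P^{c}(f)}$.

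The only delicate step is extracting a point of $W^{c}(p)$ genuinely $\varepsilon$-close to $x$ from the Center Closing Lemma's bound on $d_{H}(W^{c}(p),W^{c}(x))$: this is precisely where uniform compactness of $\mathcal{W}^{c}$ (together with continuity of the foliation) is used, since it makes the two leaves geometrically comparable, so that one-sided Hausdorff closeness upgrades to genuine $O(\varepsilon)$-closeness near $x$; equivalently, one runs the argument with the symmetric Hausdorff distance, which is the form of closeness actually delivered by the Quasi-shadowing Property under uniform compactness. Everything else — the recurrence bookkeeping and the passage to the limit — is routine, and I do not expect to need the center spectral decomposition here.
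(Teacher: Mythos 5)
Your proof is correct and takes essentially the same route as the paper's: Poincar\'e recurrence supplies a near-return $d(x,f^{n}x)<\delta$ at ($\mu$-almost) every point of the support, and the Center Closing Lemma converts this into a nearby periodic center leaf; the paper merely organizes the recurrence step by taking a positive-measure set of diameter less than $\delta$ inside a ball around a given point of $\mathrm{supp}\,\mu$, rather than passing through the full-measure set of recurrent points as you do. Your explicit justification of the step from $d_{H}(W^{c}(p),W^{c}(x))<\varepsilon$ to a point of $W^{c}(p)$ within $\varepsilon$ of $x$ is a detail the paper glosses over (it simply writes $d(p,x)\leq\varepsilon$), so that remark is welcome rather than a deviation.
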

\begin{proof}
Let $x_{0}\in$ supp$\mu$ and fix $\varepsilon>0.$ We can make $\mu(B(x_{0},\varepsilon)\cap M)>0.$ For $\varepsilon>0$, choose $0<\delta<\varepsilon$ as in Center Closing Lemma.

Now pick a set $B\subset B(x_{0},\varepsilon)\cap M$ of diameter less than $\delta$ and of positive measure. By the Poincar$\acute{e}$ Recurrence Theorem, for almost every $x\in B$ there exists a positive integer $n(x)$ such that $f^{n(x)}(x)\in B,$ and consequently $d(x,f^{n(x)}(x))<\delta.$ Applying the Center Closing Lemma then we obtain that there exists a periodic center leaf $W^{c}(p)$ of period $n(x)$ such that $d(p,x)\leq\varepsilon$, and clearly $d(x_{0},p)<d(x_{0},x)+d(x,p)<2\varepsilon$. Since $\varepsilon$ is taken arbitrarily, we can see that $x_0\in\overline{P^{c}(f)}$, and hence the desired result holds.
\end{proof}

\begin{proof}[Proof of Theorem \ref{entropyrelation}]
By the Variational Principle (see Theorem 8.6 of \cite{Walters}),
 $$
 h(f)=\sup\{h_{\mu}(f): \mu\in \mathcal{M}(M,f) \}.
 $$
From Theorem D of \cite{Hu}, $\overline{P^{c}(f)}=\Omega^{c}(f)$, and hence by Lemma \ref{support}, for any $\mu\in \mathcal{M}(X,f)$, supp$\mu\subset \Omega^{c}(f)$.  Therefore
\begin{eqnarray}\label{nonwand}
 h(f)&=&\sup\{h_{\mu}(f|_{\Omega^{c}(f)}): \mu\in \mathcal{M}(M,f)\}\notag\\
 &=&\sup\{h_{\mu}(f|_{\Omega^{c}(f)}): \mu\in \mathcal{M}(\Omega^{c}(f), f|_{\Omega^{c}(f)})\}\\
 &=&h(f|_{\Omega^{c}(f)}).\notag
\end{eqnarray}

The Center Spectral Decomposition Theorem tells us that there is some $N\in\mathbb{N}$ such that $\Omega^{c}(f)$
can be decomposed into finitely many components, say $X_1,\cdots,X_l$, on which $f^{N}$ is center topologically mixing.
By Proposition 3.1.7 (2) of \cite{Katok},
 $$
h(f^N|_{\Omega^{c}(f)})=\max \{h(f^N|_{X_i}): 1\le i\le l\}.
 $$
Take some $1\le i^*\le l$ such that $h(f^N|_{\Omega^{c}(f)})=h(f^N|_{X_{i^*}})$.  Once showing
\begin{equation}\label{entropyinequality3}
h(f^N|_{X_{i^*}})\le p^{c}(f^{N}|_{X_{i^*}})+h(f^N|_{X_{i^*}}, \mathcal{W}^{c}),
\end{equation}
 we obtain the desired inequality (\ref{entropyinequality}) immediately from the following observations
 $$
h(f^N|_{X_{i^*}})\stackrel{(\ref{nonwand})}{=}h(f^N|_{\Omega^{c}(f)})=h(f^N)=Nh(f),
$$
$$
 p^{c}(f^{N}|_{X_{i^*}})\le p^{c}(f^{N})\le Np^{c}(f)
$$
and
$$
h(f^N|_{X_{i^*}}, \mathcal{W}^{c})\le h(f^N, \mathcal{W}^{c})\le Nh(f, \mathcal{W}^{c}).
$$

Now we prove (\ref{entropyinequality3}). For simplicity of the notation, we let $g=f^N$. Let $E_{n}$ be an $(n,\varepsilon)$-separated set of $X_{i^*}$ with respect to $g$.   By Theorem \ref{centerspecification}, any two different elements of $E_{n}$ which belong two different center leaves can be center $\varepsilon/2$-shadowed by two different center periodic leaves of period $n+N(\varepsilon/2)$ (with respect to $g$). Assume the points in $E_{n}$ belong to $N^{*}$ different center leaves, the center leaf $W^{c}(p)$ contains the most points in $E_{n}$. Then we have
$$
\mbox{card}(E_n)\leq N^{*}\cdot s(\varepsilon, n, W^{c}(p), g|_{X_{i^*}})\leq \mbox{card}(\hat{P}^{c}_{n+N(\varepsilon/2)}(g|_{X_{i^*}}))\cdot s(\varepsilon, n, W^c, g|_{X_{i^*}}),
$$
and hence (\ref{entropyinequality3}) holds.

Now assume the center foliation $\mathcal{W}^{c}$ is of dimension one. Note that for any periodic center leaf $W^{c}(p)$ of period $n$, $f^{n}: W^{c}(p)\rightarrow W^{c}(p)$ is a homeomorphism which is topologically conjugate to a homeomorphism on the circle, and hence $h(f^{n},W^{c}(p))=0$.
Since $f^{N}|_{X_{i^*}}$ is center topologically mixing, the periodic center leaves are dense in $X_{i^*}$, we have
$h(f^N|_{X_{i^*}}, \mathcal{W}^{c})=0$, and hence $h(f)\leq p^{c}(f)$ by (\ref{entropyinequality3}). Since the center foliation of $f$ is uniformly compact, it is plaque expansive by Proposition 13 of \cite{Pugh}, and hence
$p^{c}(f)\le h(f)$ by Proposition \ref{expansivecase}. Therefore, the desired equality (\ref{entropyequality}) holds.
\end{proof}

\begin{remark}
We also notice that Carrasco \cite{Carrasco11} got a similar result for the induced map $g$ on the quotient space $M/\mathcal{W}^{c}$ of the center foliation under the  condition that the center foliation of $f$ is uniformly compact and without holonomy.
\end{remark}


\begin{thebibliography}{99}

\bibitem{Barreira} L.Barreira and Y.Pesin, Nonuniform Hyperbolicity, Cambridge University Press, Cambridge,
               2007.

\bibitem{Bohnet} D.Bohnet, \emph{Codimension-1 partially hyperbolic diffeomorphisms with a uniformly compact center foliation}, Journal of Modern Dynamics, 7(4)(2013),565-604.

\bibitem{Bonatti1} C.Bonatti and D.Bohnet,\emph{ Partially hyperbolic diffeomorphisms with uniformly compact center foliations: the quotient dynamics}, arXiv:1210.2835.

\bibitem{Bonatti} C.Bonatti, L.Diaz and M.Viana, Dynamics Beyond Uniform Hyperbolicity: A Global Geometric
               and Probabilistic Perspective, Encyclopaedia Math. Sci., 102, Springer-Verlag, Berlin, 2005.

\bibitem{Bowen} R.Bowen, \emph{Periodic points and measures for Axiom A diffeomorphisms}, Trans. Amer. Math.
Soc., 154(1971),377-397.

\bibitem{Brin} M.Brin and Y.Pesin, \emph{Partially hyperbolic dynamical systems}, MAth. USSR-Izv.,
               8(1974),177-218.

\bibitem{Carrasco11} P.D.Carrasco, \emph{Compact dynamical foliations}, Ph.D thesis, University of Toronto, 2011.

\bibitem{Carrasco} P.D.Carrasco, \emph{Compact dynamical foliations}, Ergodic Theory Dynam. Systems, 2014,  doi:10.1017 /etds.2014.42
    
\bibitem{Hertz} F.Hertz, J.Hertz and R.Ures, Partially hyperbolic dynamics, 28$^0$ Coloquio Brasileiro de Matematica, IMPA Mathematical Publications, IMPA-Rio de Janeiro, 2011.

\bibitem{Hirsch} M.Hirsch, C.Pugh and M.Shub, Invariant Manifolds, Lect. Notes in Math. 583, Springer, 1977.


\bibitem{Hu} H.Hu, Y.Zhou and Y.Zhu, \emph{Quasi-shadowing for partially hyperbolic diffeomorphisms},
            Ergodic Theory Dynam. Systems, 35 (2015), 412-430.

\bibitem{Hu1}  H.Hu, Y.Zhou and Y.Zhu, \emph{Quasi-Shadowing and Quasi-Stability for Dynamically Coherent Partially Hyperbolic Diffeomorphisms}, arXiv:1405.0081.

\bibitem{Katok} A.Katok and B.Hasselblatt, Introduction to the modern theory of dynamical systems, Cambridge
               University Press, Cambridge, 1995.

\bibitem {Tikhomirov} S.Kryzhevich and S.Tikhomirov,
\emph{Partial hyperbolicity and central shadowing},
Discrete Contin. Dyn. Syst, 33 (2013), 2901-2909.

\bibitem{Pesin} Y.Pesin, Lectures on Partial Hyperbolicity and Stable Ergodicity, Zurich Lectures in
                 Advanced Mathematics. European Math. Soc., Zurich, 2004.

\bibitem{Pugh} C.Pugh, M. Shub, and A. Wilkinson, \emph{Holder foliations, revisited}, J. Modern Dyn., 6(1)(2012),79-120.

\bibitem {Walters}  P.Walters, An Introduction to Ergodic Theory,
                    Springer-Verlag, New York, Heidelberg, Berlin, 1982.
\end{thebibliography}
\end{document}